\title{Iteration entropy} 
\author{Joachim von zur Gathen}
\date{\today}
\newtheorem{thm}{Theorem}
\newtheorem{defn}{Definition}
\numberwithin{equation}{section} \numberwithin{thm}{section}
\numberwithin{lem}{section}
\numberwithin{problem}{section}
\numberwithin{prop}{section}
\numberwithin{cor}{section}
\numberwithin{conj}{section}
\numberwithin{obs}{section}
\newcommand{\Z}{{\mathbb Z}}
\newcommand{\N}{{\mathbb N}}
\newcommand{\F}{{\mathbb F}}
\newtheorem{example}{Example}
\numberwithin{example}{section}
\newcommand{\entfk}{H_{f,k}}
\newcommand{\entinf}{H_{f,\infty}}
\begin{document}

\maketitle

\begin{abstract}
We apply a common measure of randomness, the entropy, in the context
of iterated functions on a finite set with $n$ elements.
For a permutation, this entropy turns out to be asymptotically
(for a growing number of iterations) close to $ \log_2 n$ minus the entropy
of the vector of its cycle lengths.
For general functions, a similar approximation holds.
%
\end{abstract}

\section{Introduction}
Arithmetic dynamics deals with discrete dynamical systems given by an (arithmetic)
function on a finite set.
Of particular interest are polynomials over a finite field or ring.
Their iterations form a well-studied subject with many applications.
In the area of cryptography, one is interested in showing some randomness properties
of such iterations.
The special case of power maps is discussed in Example \ref{powermap}.
Ideally, one would like to exhibit specific functions on finite sets whose iterations, beginning with a 
uniformly random starting value, provide uniformly random values, or at least that its values
form a pseudorandom sequence.
This goal seems out of reach at the present.

More modestly, one tries to show certain randomness properties of such a function
such as (approximate) equidistribution.
The \emph{functional graph} of $f$ has the base set as its nodes and a directed edge
from $x$ to $y$ if $f(x)=y$.
One may consider
certain graph parameters like the numbers and sizes of connected components or cycles
and ask whether they
are (approximately) distributed for the functions under consideration
 as they are for general functions.%

Beginning with \cite{flaodl90}, also \cite{flygar14, belgar16, brigar17}
studied functions and polynomials from this perspective.
For a uniformly random map on $n$ points, the expected size of the giant component
(an undirected component of largest size) in its functional graph is $\mu n$
with $\mu \approx 0.75788$ (\cite{flaodl90}, Theorem 8 (ii)).
Certain classes of polynomials over finite fields, mostly of small degree,
are considered in \cite{marpan16}.
\cite{konluc16} present theoretical and experimental results on maps given by random quadratic polynomials
 over a finite prime field. 
The expected size of the giant component coincides with that for random maps.
However, the number of cyclic points (points on a cycle in the functional graph)
is much smaller. In their experiments with the ten primes following 500\,000,
this is only about 885.
\cite{ostsha16} extend some of this to certain rational functions.

\cite{arrtav92} and \cite{bursch17} show precise results on the distribution of cycle lengths
for polynomials over a finite field, and also for rational functions.
The least common multiple $T$ of all cycle lengths is the order for a permutation and might
be called the asymptotic order for a general function.
They prove a lower bound $\frac d 2 (1+o(1))$ for $\log T$.
\cite{marpan17} consider the distribution of this value, and also the number of cyclic points, for 
special types of polynomials.

In uniformly random permutations, the expected length of a longest cycle is $\tau n$
with $\tau \approx 0.62433$;  \cite{shello66} give an exact expression for $\tau$,
and much statistical information about the cycle length of random permutations,
including the moments of the $r$th shortest and longest lengths, for $r=1, 2, \ldots$.
\cite{mansha17}  find the average number of cyclic points for quadratic polynomials
to be about the average size of a longest cycle, namely, close to $\sqrt{2n/\pi}$.
For $n=500\,000$, this evaluates to about 564.
Thus there is a substantial difference of the expected largest cycle lengths
between random permutations and general functions. 

In this paper, we take a different route.
We define a general notion of \emph{iteration entropy}, applicable to any function from a finite set
to itself.
For a growing number of iterations, it approaches a limit which forms the central concept of this paper,
the \emph{asymptotic iteration entropy}.
This measure abstracts from individual values like number or size of components or cycles
by including them in a single parameter.
It enjoys some natural properties like convexity for disjoint unions of functions.
One can compare different functions under this measure.
For example, when we fix the component sizes (summing to $n$),
then permutations have a larger asymptotic iteration entropy than other functions.

For the connected components of size $t$ contaning a cycle of size $c$, the values $t \log_2 c$
make up the asymptotic iteration entropy (up to a factor of $n$).
This suggests as an open question the study of this parameter,
or, more generally, the joint distribution of $(t,c)$ in functional graphs.

\section{The iteration entropy}
We let $X$ be a finite set with $n$ elements, $f\colon X \rightarrow X$ a map, and
for a nonnegative integer $j$, $f^{(j)} = f \circ f \circ \cdots \circ f$ (with $j$ copies of $f$)
its $j$th iteration. Thus $f^{(0)} = \text{id}$. For a positive integer $k$ and $x,y \in X$,
we denote as
\begin{equation*}
N_{f,k} (x,y) = \# \{ j \in \N \colon 0\leq j < k, f^{(j)}(x) = y \}
\end{equation*}
the number of times that $x$ is mapped to $y$ by an iterate of $f$, before the $k$th one.
Then
\begin{align*}
\sum_{x,y \in X} N_{f,k}(x,y) & = kn,\\
N_{f,k}(x,y) & \leq k \text{ for all } x, y,
\end{align*}
and
\begin{equation*}
\label{defp}
p_{f,k}(x,y) = \frac {N_{f,k}(x,y)} {kn}
\end{equation*}
defines a probability distribution on $X^2$, with all $p_{f,k}(x,y)$ at most $1/n$.
The usual Shannon entropy $H^*(p_{f,k})$ of this distribution is
$$
H^*(p_{f,k}) =  \sum_{x,y \in X}   {p_{f,k}(x,y)} \log_2 p^{-1}_{f,k}(x,y)
 = \sum_{x,y \in X}  \frac {N_{f,k}(x,y)} {kn}  \log_2 (\frac {kn} {N_{f,k}(x,y)} ).
$$
Throughout this paper, we employ
the usual convention that $z \log_2 z^{-1}$ is taken as $0$ when $z=0$.
The general upper bound on the entropy implies that $0 \leq H^*(p_{f,k}) \leq 2 \log_2 n$.

An observation by Igor Shparlinski leads to the following simplification:
we subtract $\log_2 n$ from this value.

\begin{defn}
\label{defItEnt}
The (shifted $k$th) {\em iteration entropy}  $H_{f,k}$ of $f$ is:
$$
H_{f,k} = H^*(p_{f,k}) - \log_2 n 
 = \frac 1 {kn}  \sum_{x,y \in X}  {N_{f,k}(x,y)}   \log_2 (\frac {k} {N_{f,k}(x,y)} ).
$$
\end{defn}
Thus 
$$0 \leq \entfk = H^*(p_{f,k}) - \log_2 n \leq  \log_2 n.$$

We usually leave out the ``shifted'' in the following, although $H$ is not defined as an entropy.
We start with three examples.
\begin{example}
If $f$ is the identity function on $X$, then
\begin{align*}
N_{f,k} (x,y) & = \left\{\begin{array}{ll}
k & \text{if } x=y, \\ 0 & \text{otherwise},
\end{array}
\right. \\
\entfk & = 0.
\end{align*}
\end{example}

\begin{example}
If $f$ is a cyclic permutation, then $N_{f,n}(x,y) = 1$ for all $x,y\in X$,
$p_{f,n}(x,y) = 1/n^2$ is the uniform distribution on $X^2$ with
Shannon entropy $ 2 \log_2 n$, and
\begin{equation*}
\label{maxEnt}
H_{f,n} =  \log_2 n.
\end{equation*}
For a positive integer $m$, we have for all $x,y\in X$
\begin{equation*}
\label{Nml}
\begin{aligned}
N_{f,nm}(x,y) & = m, \\
H_{f,nm}  & =  \log_2 n.
\end{aligned}
\end{equation*}
\end{example}
\begin{example}
\label{linCongGen}
We take $X$ to be a field with $n$ elements, $a, b \in X$ with $a(a-1) \neq 0$, and consider
the \emph{linear congruential generator} $f$ given by $f(x) = ax +b$.
Then
$$
f^{(j)} (x) = a^j x + \frac {(a^j -1) b} {a-1}
$$
for all $j \geq 0$.
Furthermore, let $\ell$ be the order of $a$ in the multiplicative group of $X$.
Then the functional graph of $f$ consists of one cycle $C_0 = \{ x_0 \}$ with the fixed point
$x_0 = -b/(a-1)$ and length $c_0=1$, plus $(n-1)/\ell$ cycles $C_1, \ldots, C_{(n-1)/\ell}$
of length $\ell$.
For a positive integer $m$, $k = \ell m$,  $1 \leq i \leq (n-1)/\ell$ and $x,y \in C_i$, we have
$N_{f,k}(x,y)  = m$,
and also $N_{f,k}(x_0,x_0) = k$. 
Thus
\begin{align*}
\entfk & = \frac {1} {kn} \bigl(k \log_2 1 + \sum_{1 \leq i \leq (n-1)/\ell} \; \sum_{x,y \in C_i} m \log_2 ( \frac k {m}) \bigr) \\
& = \frac 1 {\ell m \cdot n} \frac{(n-1) \ell^2 } \ell {\,  m \log_2 \ell}  
 =  (1-\frac 1 n) \log_2 \ell.
\end{align*}
\end{example}

\section{Combining functions}
\label{combinSection}
Given functions $f_i \colon X_i \rightarrow X_i$ on pairwise disjoint sets $X_1,\ldots,X_s$,
we can combine them into a function $f \colon X \rightarrow X$ on their union $X = \bigcup_{1\leq j \leq s} X_i$
by setting $f(x) = f_i(x)$ for $x \in X_i$.
The functional graph of $f$ is the disjoint union of those of the $f_i$;
the same holds for the usual notion of graph as the set of pairs $(x,f(x))$.
We write $n_i = \# X_i$ and $n = \sum_{1 \leq i \leq s} n_i = \# X$.
The iteration entropy of $f$ turns out to be a convex linear combination of those of the $f_i$.

\begin{thm}
\label{combinThm}
For a positive integer $k$, we have
\begin{align*}
\entfk &  = \sum_{1 \leq i \leq s} \frac {n_i} n  H_{f_i,k} . 
\end{align*}
\end{thm}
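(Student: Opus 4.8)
The plan is to unwind both sides using Definition~\ref{defItEnt} and the key observation that, because the $X_i$ are pairwise disjoint and $f$ preserves each $X_i$, the counting function $N_{f,k}$ decomposes cleanly: for $x \in X_i$ and $y \in X_j$ with $i \neq j$, no iterate of $f$ maps $x$ to $y$, so $N_{f,k}(x,y) = 0$ and contributes nothing (by the $z\log_2 z^{-1} = 0$ convention). For $x, y$ in the same block $X_i$, the iterates of $f$ starting at $x$ coincide with the iterates of $f_i$, so $N_{f,k}(x,y) = N_{f_i,k}(x,y)$.

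Concretely, I would start from
$$
\entfk = \frac{1}{kn} \sum_{x,y \in X} N_{f,k}(x,y) \log_2\Bigl(\frac{k}{N_{f,k}(x,y)}\Bigr),
$$
split the double sum over $X \times X$ into the $s^2$ blocks $X_i \times X_j$, discard the off-diagonal blocks ($i \neq j$) since every term there vanishes, and on each diagonal block substitute $N_{f,k}(x,y) = N_{f_i,k}(x,y)$. This leaves
$$
\entfk = \frac{1}{kn} \sum_{1 \leq i \leq s} \; \sum_{x,y \in X_i} N_{f_i,k}(x,y) \log_2\Bigl(\frac{k}{N_{f_i,k}(x,y)}\Bigr) = \frac{1}{kn} \sum_{1 \leq i \leq s} k n_i H_{f_i,k},
$$
where the last equality is just Definition~\ref{defItEnt} applied to each $f_i$ on $X_i$ (with $n_i = \#X_i$), multiplied through by $k n_i$. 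Cancelling the factor $k$ gives $\entfk = \sum_i (n_i/n) H_{f_i,k}$, as claimed. That the coefficients $n_i/n$ are nonnegative and sum to $1$ makes this a convex combination, matching the statement preceding the theorem.

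The only real content is the decomposition $N_{f,k}(x,y) = N_{f_i,k}(x,y)$ for $x,y \in X_i$ and $N_{f,k}(x,y) = 0$ across blocks, which I would justify by noting that $f(X_i) \subseteq X_i$ forces $f^{(j)}(x) \in X_i$ for all $j$ whenever $x \in X_i$, and that on $X_i$ the maps $f$ and $f_i$ agree, hence $f^{(j)}(x) = f_i^{(j)}(x)$ for all $j \geq 0$ by a trivial induction. Everything after that is bookkeeping: rearranging a finite sum and invoking the definition. I do not anticipate a genuine obstacle — the main thing to be careful about is making the $z \log_2 z^{-1} = 0$ convention explicit when dropping the off-diagonal terms, so that those $N = 0$ contributions are unambiguously zero rather than indeterminate.
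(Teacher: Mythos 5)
Your proposal is correct and follows essentially the same route as the paper: decompose $N_{f,k}$ blockwise (zero across blocks, equal to $N_{f_i,k}$ on each $X_i \times X_i$), split the double sum accordingly, and recognize each diagonal block as $\frac{kn_i}{kn} H_{f_i,k}$. Your explicit justification of the decomposition via $f(X_i) \subseteq X_i$ and the $z\log_2 z^{-1}=0$ convention is slightly more detailed than the paper's, but the argument is the same.
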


\begin{proof}
For $x,y \in X$, we have:
$$
N_{f,k} (x,y)  = \left\{\begin{array}{ll}
N_{f_i,k} & \text{if } x,y \in X_i \text{ for some } i , \\ 0 & \text{otherwise}.
\end{array}
\right. \
$$
Thus
\begin{align*}
\entfk = & \frac 1 {kn} \sum_{x,y \in X}   N_{f,k}(x,y) \log_2 \frac k {N_{f,k}(x,y)} \\
= & \frac 1 {kn}  \sum_{1 \leq i \leq s}  \sum_{x,y \in X_i}  N_{f_i,k}(x,y) \log_2 \frac k {N_{f_i,k}(x,y)} 
 = \sum_{1 \leq i \leq s} \frac {n_i} n  H_{f_i,k} .
\end{align*}
\end{proof}

\section{The asymptotic iteration entropy}
\label{estimating}
The \emph{functional graph} of an arbitrary function $f\colon X \rightarrow X$
has $X$ as its set of nodes and a directed edge from $x$ to $y$ if $f(x)=y$.
The underlying undirected graph consists of undirected connected components $T_i$
each containing a cycle $C_i$, for various values of $i$.
We consider these subgraphs as subsets of $X$, ignoring the order imposed by applications of $f$.
The nodes in $T_i \setminus C_i$ form various \emph{preperiod trees}.
The subgraph $T_i$ consists of $C_i$ and all nodes in the preperiod trees attached to $C_i$,
and we let $t_i$ and $c_i$ be the sizes of $T_i$ and $C_i$, respectively.
Figure \ref{squaringmap} gives two explicit examples.

\begin{figure}[!ht]
  \psset{xunit=1cm,yunit=1cm,runit=1cm}%
  $\vcenter{ \hbox{
      \begin{pspicture}(2.5,0)(11,17)

        \rput(8,14){
          \begin{pspicture}(-1,-1)(0,7) \begin{psmatrix}[mnode=circle,
              colsep=0cm, rowsep=0.3cm]
              & & & & & & & \makebox[0.3cm]{1} & & & [mnode=none]\rput(0,0){$\mathbb{F}_{17}^{\times}$} & & & & [mnode=none] \\
              & & & & & & & \makebox[0.3cm]{-1} & & & & & & & [mnode=none] \\
              & & & \makebox[0.3cm]{4} & & & & & & & & \makebox[0.3cm]{-4} & & & [mnode=none] \\
              & \makebox[0.3cm]{2} & & & & \makebox[0.3cm]{-2} & & & & \makebox[0.3cm]{8} & & & & \makebox[0.3cm]{-8} & [mnode=none] \\
              \makebox[0.3cm]{6} & & \makebox[0.3cm]{-6} & & \makebox[0.3cm]{7} & & \makebox[0.3cm]{-7} & & \makebox[0.3cm]{5} & & \makebox[0.3cm]{-5} & & \makebox[0.3cm]{3} & & \makebox[0.3cm]{-3}
              \psset{arrows=->}
              \ncline{5,1}{4,2} \ncline{5,3}{4,2}
              \ncline{5,5}{4,6} \ncline{5,7}{4,6}
              \ncline{5,9}{4,10} \ncline{5,11}{4,10}
              \ncline{5,13}{4,14} \ncline{5,15}{4,14}
              \ncline{4,2}{3,4} \ncline{4,6}{3,4}
              \ncline{4,10}{3,12} \ncline{4,14}{3,12}
              \ncline{3,4}{2,8} \ncline{3,12}{2,8}
              \ncline{2,8}{1,8}
              \nccircle[angleA=0]{->}{1,8}{.3cm}
            \end{psmatrix}
          \end{pspicture}
        }

        \rput(8.8,7){
          \begin{pspicture}(-0.3,-1)(0,5)
            \begin{psmatrix}[mnode=circle, colsep=0.4cm, rowsep=0.4cm]
              & \makebox[0.3cm]{3} & & & \makebox[0.3cm]{-9} &[mnode=none] & \makebox[0.3cm]{8} & \makebox[0.3cm]{-1} \\
              & & \makebox[0.3cm]{9} & \makebox[0.3cm]{5} & &[mnode=none]\rput(0,0){$\mathbb{F}_{19}^{\times}$} & \makebox[0.3cm]{-7} & \makebox[0.3cm]{1} \\
              \makebox[0.3cm]{-4} & \makebox[0.3cm]{-3} & & & \makebox[0.3cm]{6} & \makebox[0.3cm]{-5} & \makebox[0.3cm]{-8} & [mnode=none]\\
              & & \makebox[0.3cm]{4} & \makebox[0.3cm]{-2} & &[mnode=none] & \makebox[0.3cm]{7} & [mnode=none]\\
              & \makebox[0.3cm]{2} & & & \makebox[0.3cm]{-6} & [mnode=none] & & [mnode=none]
              \psset{arrows=->} \ncline{1,2}{2,3} \ncline{1,5}{2,4}
              \ncline{2,3}{2,4} \ncline{2,4}{3,5} \ncline{3,1}{3,2}
              \ncline{3,2}{2,3} \ncline{3,6}{3,5} \ncline{3,5}{4,4}
              \ncline{4,3}{3,2} \ncline{4,4}{4,3} \ncline{5,2}{4,3}
              \ncline{5,5}{4,4}
              \ncline{1,7}{2,7} \ncarc{->}{2,7}{3,7}
              \ncarc{->}{3,7}{2,7} \ncline{4,7}{3,7}
              \ncline{1,8}{2,8} \nccircle[angleA=180]{->}{2,8}{.3cm}
            \end{psmatrix}
          \end{pspicture}
        }
%
      \end{pspicture}
    }}$
   \caption{The function $x \mapsto x^2$ on the units modulo $17$ and $19$.}
  \label{squaringmap}
\end{figure}
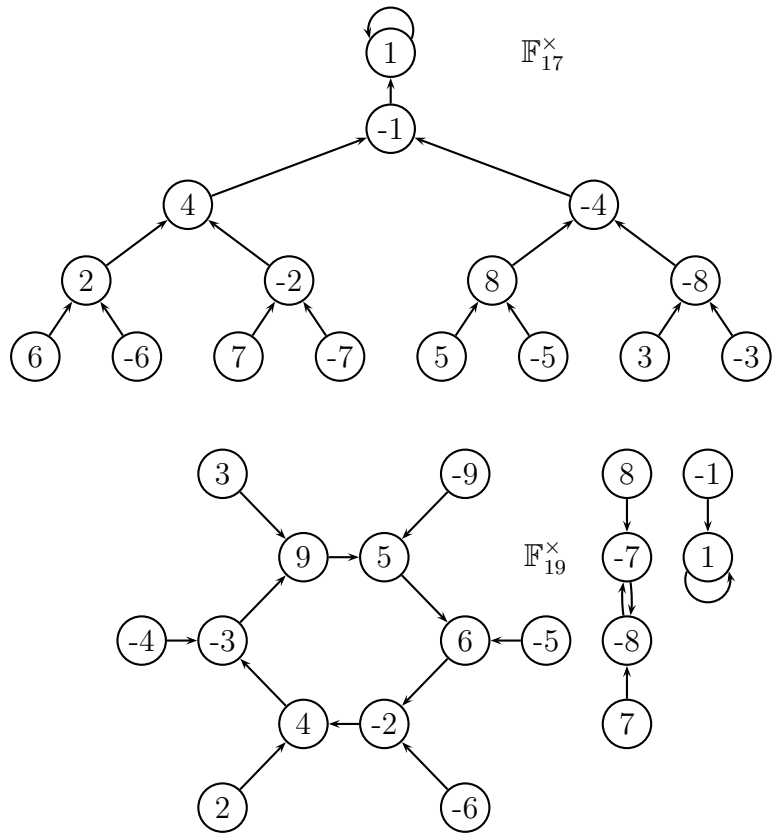

\begin{defn}
\label{asymEnt}
Let $f$ be a function with a functional graph of component sizes $t_i$
and cycle sizes $c_i$ for $1 \leq i \leq s$, as above.
Then
\begin{equation}
\label{Hstar}
H_{f,\infty} = 
 \frac 1 n  \sum_{1\leq i \leq s} t_i \log_2 c_i
\end{equation}
is the \emph{asymptotic (shifted) iteration entropy} of $f$.
\end{defn}
If $f_i$ denotes the restriction of $f$ to $T_i$, operating on $t_i$ values,
then
\begin{align}
H_{f_i,\infty} = & \log_2 c_i, \nonumber \\
H_{f,\infty} = &
  \sum_{1\leq i \leq s}  \frac {t_i} n H_{f_i,\infty},
\label{entinfcombin}
\end{align}
similar to Theorem \ref{combinThm}.

\begin{thm}
\label{generalBound}
For $k \geq 4n \geq 77$,
the following hold.
\renewcommand{\theenumi}{(\roman{enumi})}
\begin{enumerate}
\item
\begin{equation}
\label{generalHbound}
| \entfk - H_{f,\infty} | \leq \frac {4n  \log_2   k} k .
\end{equation}
\item
If $f$ is a permutation, then $c_i = t_i$ for all $i$ and
\begin{align}
| \entfk - \entinf | & \leq \frac {3 n  \log_2 n} k \nonumber \\
\entinf + H^*(\frac {c_1} n ,\ldots, \frac {c_s} n) &  =  \log_2 n ,
 \label{HforPerms} 
\end{align}
Here $H^*({c_1}/ n ,\ldots, {c_s}/ n)$ is the Shannon entropy of the distribution on $s$ elements (the cycles)
with probabilities ${c_1}/ n ,\ldots, {c_s} / n$.
If $f$ is cyclic, then $\entinf =  \log_2 n$.
If $f$ is the identity function, then $\entinf = 0$.
\item
If $f$ is a permutation and $k$ an integer multiple of the order
$\mathrm{lcm} (c_1, \ldots, c_s)$ of $f$, then
\begin{equation*}
\entfk = \entinf .
\end{equation*}
\item
For any $f$, we have
$0 \leq \entinf \leq  \log_2 n$,
and $\entinf = \log_2 n$ if and only if $f$ is a cyclic permutation.
\end{enumerate}
\end{thm}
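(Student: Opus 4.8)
The plan is to establish (i) first and to read off (ii)--(iv) from the structural information it produces together with elementary inequalities. For (i), the first step is to reduce to a single connected component. Writing $f_i$ for the restriction of $f$ to the invariant set $T_i$, Theorem~\ref{combinThm} and~\eqref{entinfcombin} give
\begin{equation*}
\entfk - \entinf = \sum_{1\le i\le s}\frac{t_i}{n}\,\bigl(H_{f_i,k}-\log_2 c_i\bigr),
\end{equation*}
so a bound $|H_{f_i,k}-\log_2 c_i|\le 4t_i(\log_2 k)/k$ for each $i$, summed with the weights $t_i/n$ (which sum to $1$) and using $t_i\le n$, yields~\eqref{generalHbound}. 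For a permutation every $T_i$ equals its cycle $C_i$, so $t_i=c_i$, and I will instead aim for the sharper per-component bound $\le c_i(\log_2 n)/k$, which gives the first line of~\eqref{HforPerms} after the same summation.

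The heart of the matter is the single-component estimate, and this is the step I expect to be the main obstacle, not because it is deep but because keeping the logarithmic factors honest and the constants within the stated bounds requires care. Fix a connected functional graph on $m$ nodes with cycle $C$ of length $c$, assume $k\ge 4m$, and for a node $x$ let $\rho(x)$ be its preperiod (the least $j$ with $f^{(j)}(x)\in C$), so $0\le\rho(x)\le m-c$ and $\sum_x\rho(x)$, a total path length in the preperiod trees, is at most $\binom{m-c+1}{2}$. One splits the sum defining $H_{f_i,k}$ according to whether $y\in C$. For $y\notin C$ one has $N_{f_i,k}(x,y)\in\{0,1\}$, equal to $1$ exactly when $y$ lies on the orbit of $x$ before it reaches $C$, so these pairs contribute precisely $\bigl(\sum_x\rho(x)\bigr)\log_2 k$ to $km\,H_{f_i,k}$. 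For $y\in C$, the $k-\rho(x)$ iterates of $x$ already on $C$ are spread over $C$ as evenly as possible, so $N_{f_i,k}(x,y)$ is $\lfloor(k-\rho(x))/c\rfloor$ or one more, hence within $\rho(x)/c+1$ of $k/c$; writing $g(a)=a\log_2(k/a)$ (so that the limiting value of the $y\in C$ part is $\sum_x\sum_{y\in C}(k/c)\log_2 c=mk\log_2 c$), a mean-value estimate for $g$ around $k/c$ bounds the deviation of each $x$'s contribution by $(\rho(x)+c)\log_2 k$, because the hypothesis $k\ge 4m\ge 4c$ keeps $k/N_{f_i,k}(x,y)$ within a factor $4/3$ of $c$ and hence $|g'|\le\log_2 k$ on the relevant range. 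Summing over the $m$ nodes $x$, using $c\le m$ and the bound on $\sum_x\rho(x)$, and dividing by $km$ produces $|H_{f_i,k}-\log_2 c|=O(m(\log_2 k)/k)$ with a constant comfortably below $4$. When $\rho\equiv0$ (the permutation case) the same computation has $|N_{f_i,k}(x,y)-k/c|\le 1$ and $k/N_{f_i,k}(x,y)$ within a factor $4/3$ of $c\le n$, so $|g'|\le\log_2 n$ and $\log_2 k$ may be replaced by $\log_2 n$; this is the bound in~\eqref{HforPerms}.

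The remaining parts are short. For the identity in~\eqref{HforPerms}: for a permutation $t_i=c_i$, so $\entinf=\frac1n\sum_i c_i\log_2 c_i$, while $H^*(c_1/n,\dots,c_s/n)=\sum_i(c_i/n)\log_2(n/c_i)=\log_2 n-\entinf$ since $\sum_i c_i=n$; a cyclic permutation has $s=1$, $c_1=n$, hence $\entinf=\log_2 n$, and the identity has all $c_i=1$, hence $\entinf=0$. For (iii): if $\mathrm{lcm}(c_1,\dots,c_s)\mid k$ then $c_i\mid k$ for each $i$, so $N_{f,k}(x,y)=k/c_i$ exactly for $x,y$ in a common cycle, and Definition~\ref{defItEnt} collapses to $\frac1n\sum_i c_i\log_2 c_i=\entinf$. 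For (iv): $\entinf\ge0$ is immediate from~\eqref{Hstar} since $t_i\ge1$ and $c_i\ge1$; for the upper bound, $c_i\le t_i$ and monotonicity of $\log_2$ give $\sum_i t_i\log_2 c_i\le\sum_i t_i\log_2 t_i\le\bigl(\sum_i t_i\bigr)\log_2 n=n\log_2 n$, using $t_i\le n$ in the last step, so $\entinf\le\log_2 n$. Equality in the first inequality forces $c_i=t_i$ for every $i$ (so $f$ is a permutation), and equality in the second forces each $t_i\in\{0,n\}$, hence $s=1$ and $t_1=n$ since all $t_i\ge1$; together these say $f$ is an $n$-cycle, and conversely such an $f$ attains $\log_2 n$ by the permutation case. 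The role of the numerical hypothesis $k\ge 4n\ge 77$ is to make the factor-$4/3$ control above uniform and to pin down the constants $4$ and $3$ in the statement.
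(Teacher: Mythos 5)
Your proposal is correct and follows essentially the same route as the paper: reduce to a single component via Theorem~\ref{combinThm} and~\eqref{entinfcombin}, observe that $N_{f,k}(x,y)$ is within an additive $O(d/c+1)$ of $k/c$ for cycle points and is $0$ or $1$ otherwise, and bound the resulting perturbation of $a\log_2(k/a)$ (your mean-value estimate for $g$ plays the role of the paper's explicit $\delta,\epsilon,\alpha$ expansion, and your per-node preperiod $\rho(x)$ refines the paper's uniform depth $d$). The only quibble is your ``factor $4/3$'' claim in the general (non-permutation) case, where $k/N$ can approach $2c$ when $\rho(x)$ is near $m-c$ and $k=4m$; this is harmless since $|g'|\le\log_2 k$ still holds there, and the rest of (ii)--(iv) matches the paper's computations.
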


\begin{proof}
(i)
We start with a single connected component $X$ containing a single cycle $C \subseteq X$ of size $c$. 
The depth $d$ of the functional graph on $X$ is the maximal number of edges on a directed path within it
that terminates in its first point on the cycle;
this equals the maximal number of nodes on such paths minus 1.
Cyclic points do not contribute to this depth.
In Figure \ref{squaringmap}, we have $d=4$ in the graph at the top,
and $d=1$ at the bottom.
We consider the division with remainder
\begin{equation}
\label{divRem}
k= mc +r,
\end{equation}
 with $0 \leq r <c$.
The iterations of $f$ up to $f^{k-1}$ send each initial value on a cycle $m$ times around the cycle, 
and then up to $r$ steps further.
Thus if $x$ and $y$ are on the same cycle, then the orbit of $x$ includes $y$ $m$ times,
plus possibly one more time,
namely if the distance (in the directed functional graph) from $x$ to $y$ is less than $r$.
An off-cycle value spends at most $d$ steps before reaching its root on the cycle,
and then cycles around for at least $k-d$ steps.
Thus for $x,y \in X$, there is an integer $u(x,y)$ so that
\begin{align}
N_{f,k} (x,y) = & \left\{\begin{array}{ll}
 m  + u(x,y) \text{ with } - \lceil \frac d {c} \rceil \leq u(x,y) \leq 1 &
 \text{if } y \in C, \\ 
 u(x,y) \text{ with } 0 \leq u(x,y) \leq 1 & \text{if } y \in X \setminus C,\\
 0 & \text{otherwise},
\end{array}
\right.
\label{numbersCycle}
 \\
\entfk  = & \frac 1 {kn} \sum_{{x\in X} \atop {y \in C} } ( {m + u(x,y)} )
\log_2 \frac { k} {m+ u(x,y)}
\label{contrXC} \\
& + \frac 1 {kn} \sum_{{x\in X} \atop { {y \in X \setminus C} \atop {u(x,y)=1} }}{u(x,y)}
\log_2  \frac  {k} {u(x,y)} ,
\label{hiTwo} 
\end{align}

Since $n \geq c$ and $k\geq 4n > 4d$, we have $m - \lceil d/c \rceil > 0$.
We write
\begin{equation}
\label{infXC}
\entinf =   \log_2  {c} 
=  \sum_{{x\in X} \atop {y \in C} } \frac {\log_2 c}{cn}.
\end{equation}
%

For the error bound, we first bound the difference of the contributions of $(x,y) \in X \times C$ to
(\ref{contrXC}) and (\ref{infXC}).
This proceeds in two steps, first ignoring the logarithmic factors.
We use
\begin{align}
\delta(x,y) & = \frac {m + u(x,y)} {kn} - \frac 1 {c n} =  \frac{cm + c u(x,y) - k} {c kn}
=  \frac{c u(x,y) - r} {c kn}, \label{deltaDef}\\
\delta(x,y) & \leq  \frac{c - r} {c kn} \leq \frac c {ckn}, \nonumber\\
\delta(x,y) & \geq \frac {-c (d/c+1) - r} {c kn} > \frac {-d - c -r} {c kn}, \nonumber \\
|\delta(x,y)| & \leq \frac {d + c + r} {c kn} \leq \frac 2 {c k},
\label{deltaEstimateTwo}
\end{align}
  since $r < c \leq d+c \leq n$.
For the logarithms we consider, again for $x\in X$ and $y \in C$,
$$
\epsilon(x,y) = \frac {\delta(x,y)} {1/c n} = c n \delta(x,y),
$$
so that 
\begin{align}
|\epsilon(x,y)| & \leq \frac {2n} k \leq \frac 1 2, \nonumber \\
| \log_2 (1+ \epsilon (x,y)) | & \leq |2 \epsilon(x,y) | \leq \frac {4n} k,
\label{epsEstimateTwo} \\
\frac {m + u(x,y)} {kn} 
 & = \frac 1 {c n} \cdot (1+\epsilon(x,y)) .  \nonumber
\end{align}

The difference between the contributions of  $(x,y) \in X \times C$ to $\entfk$ and to
$\entinf $ is
\begin{align}
\alpha(x,y)   & = 
\frac {m + u(x,y)} {kn}
\log_2 \frac { k} {m+ u(x,y)}
 -  \frac {\log_2 c } {c n}
\nonumber \\
& = (\frac 1 {c n} + \delta(x,y) ) \bigl(\log_2 c  - \log_2(1+\epsilon(x,y) ) \bigr)
- \frac {\log_2 c } {c n}  \label{alphaDef}  \\
& = - \delta(x,y) \log_2 c  - \frac 1 {c n} \log_2(1+\epsilon(x,y) ) 
\nonumber \\
& \quad  -  \delta(x,y)  \log_2(1+\epsilon(x,y) ) . \nonumber
\end{align}

From (\ref{deltaEstimateTwo}) and (\ref{epsEstimateTwo}), we have, as in a Cauchy-Schwartz inequality,
\begin{equation*}
\label{alphaEstimate}
\begin{aligned}
| \alpha(x,y) | & \leq \frac {2 \log_2 c} {c k} + \frac 1 {c n} \cdot \frac {4n} k 
+ \frac {2} {c k} \cdot \frac {4n} k \\
&  = \frac {1} {ck} (2 \log_2  c +4 + \frac {8n} k )
  \leq \frac {3 \log_2 n} {c k}.
  \end{aligned}
  \end{equation*}
%

In total, we find
\begin{align}
|\entfk - \entinf| & \leq  \sum_{{x\in X} \atop {y \in C} } | \alpha(x,y) | +
\bigl | \sum_{{x\in X} \atop { {y \in X \setminus C} \atop {u(x,y)=1} }} \frac {u(x,y)} {kn} 
\log_2  \frac  {k} {u(x,y)} \bigr| 
\label{HminusHstarIneq1} \\
& \leq  \sum_{{x\in X} } \bigl( \sum_{{y\in C} } \frac {3 \log_2 n} {c k}
+  \sum_{y \in X}   \frac { \log_2 k} {kn} \bigr)
\label{HminusHstarIneq2} \\
& \leq    \sum_{{x\in X} } \bigl( \frac {3 \log_2 n} k + \frac {\log_2 k} {k} \bigr) 
\label{HminusHstarIneq3}\\
& \leq \frac {3n \log_2 n + n \log_2 k} k 
 \leq \frac {4n  \log_2  k} k.
 \label{HminusHstarIneq4}
\end{align}

We now turn to the general case, with connected components $T_i$ of size $t_i$
containing a cycle $C_i$ of size $c_i$,
and let $f_i $
be the restriction of $f$ to $T_i$, for $1 \leq i \leq s$.
Thus $\sum_{1 \leq i \leq s} t_i = n$, the graph of each $f_i$ contains just one component $T_i$, and
 $f$ is the combination of all $f_i$ in the sense of Section \ref{combinSection}.
From Theorem \ref{combinThm} and (\ref{entinfcombin}),
we have
\begin{equation*}
\entfk -\entinf 
 =  \sum_{1 \leq i \leq s}  \frac {t_i} n  (H_{f_i,k}  -  H_{f_i,\infty}) .
\end{equation*}

Since for the single-cycle function $f_i$, $t_i$ plays the role of $ n$ in (\ref{Hstar})
and 
$H_{f_i,\infty} = \log_2 c_i$, 
it follows from (\ref{generalHbound}) that
\begin{equation}
\label{HminusHstarIneq5}
| \entfk -\entinf | \leq \sum_{1 \leq i \leq s}  \frac {t_i} n | H_{f_i,k} - H_{f_i,\infty} | 
\leq  \sum_{1 \leq i \leq s}  \frac {t_i} n \cdot \frac {4 t_i \log_2 k} k 
\leq  \frac {4n \log_2 k} k.
\end{equation}

(ii)  As in the proof of (i), we first assume $f$ to be a cyclic permutation.
Then $d=0$ and $0 \leq u(x,y) \leq 1$ in the first line of (\ref{numbersCycle}), and
$u(x,y)=0$ in the second line, since $X=T=C$.
In the equation (\ref{hiTwo}), the last summand vanishes in (\ref{HminusHstarIneq1})
through (\ref{HminusHstarIneq3}), and the bound in (\ref{HminusHstarIneq4}) becomes
$( {3n  \log_2 n})/k$.

Representing a general permutation as a combination of cyclic ones
gives this bound also in (\ref{HminusHstarIneq5}). Furthermore, we have
\begin{equation*}
\entinf + H^*(\frac {c_1} n ,\ldots, \frac {c_s} n)
 = \frac 1 n  \sum_{1 \leq i \leq s} {c_i}  \log_2  {c_i }  + \sum_{1 \leq i \leq s} \frac {c_i} n \log_2 \frac n {c_i}
 =  \log_2 n.
\end{equation*}

(iii) In addition to the properties in (ii), now $r=0$ in (\ref{divRem}) and $u(x,y) = 0$
in the first line of (\ref{numbersCycle}).
Therefore $\delta(x,y) = 0$ in (\ref{deltaDef})
and $\alpha(x,y) = 0$ in (\ref{alphaDef}).

(iv) 
Using (\ref{HforPerms}) and $c_i \leq t_i$ for all $i$, we have
$$
\entinf =  \frac 1 n  \sum_{1\leq i \leq s} t_i \log_2 c_i \leq  \frac 1 n  \sum_{1\leq i \leq s} t_i \log_2 t_i
 =  \log_2 n - H^*(\frac {t_1} n ,\ldots, \frac {t_s} n) \leq  \log_2 n.
$$
The first inequality is strict unless $t_i = c_i$ for all $i$, and
$H^*({t_1} / n ,\ldots, {t_s}/ n) = 0$
if and only if $s=1$ and thus $t_1 = n$.
Hence $\entinf =  \log_2 n$ if and only if $f$ is a cyclic permutation.
\end{proof}

The main term $\entinf$ in Theorem \ref{generalBound} (i) is independent of $k$,
and the error bound goes to zero with growing $k$.
\cite{sin66} calls the expression $ \sum_{1\leq i \leq s} t_i \log_2 c_i$
a \emph{cross-entropy}, but does not discuss it further.
It plays a role in modern cryptanalysis of classical ciphers,
as in \cite{las17}.
We are not aware of other sources for this cross-entropy.

While Definition \ref{defItEnt} of the shifted iteration entropy is stated as  a sum over $n^2$ terms,
the number of summands in the asymptotic shifted iteration entropy is only the number of cycles.
Of course, the different cycles lengths seem, in general, hard to compute.

If the functional graph of a function $f$ on $n$ elements
 contains a connected component
of  size $t_1 = \tau n$ with a cycle of length $c_1 = n^\gamma$, plus possibly other components,
then
\begin{equation}
\label{longCycle}
\entinf \geq \tau  \gamma \log_2 n.
\end{equation}
If $f$ is, in addition, a permutation, then
\begin{equation}
\label{longCyclePerm}
\entinf \geq  \tau  \log_2 n  + \tau \log_2 \tau.
\end{equation}

\section{Tree surgery}
How do the asymptotic iteration entropies of two distinct but closely related functions compare?
We discuss three ways of slightly modifying a functional graph and their effect on the asymptotic iteration entropy.

Suppose we remove one ``leaf'' (most outlying node) from one of the preperiod trees.
Thus we consider components and cycles $T_i \supseteq C_i$, and set
 $t'_i = t_i$ and $c'_i = c_i$ for all $i$, except that $t'_s = t_s -1$, assuming $t_s > c_s$.
 We take a new function $f'$ on a set with $n-1$ elements whose graph has these parameters.
Then
\begin{align*}
\Delta & = \entinf - H_{f',\infty} = (\frac 1 n - \frac 1 {n-1} ) \sum_{1\leq i < s} t_i \log_2 c_i
+ (\frac  {t_s} n - \frac {t_s -1} {n-1} ) \log_2 c_s \\
& = \frac 1 {n (n-1)} (( \sum_{1\leq i < s} t_i \log_2 c_i) - (n-t_s) \log_2 c_s )
= \frac 1 {n (n-1)}  \sum_{1\leq i < s} t_i \log_2 \frac {c_i} {c_s} .
\end{align*}
If $s=1$, then $\Delta = 0$, and if $s\geq 2$ and $C_s$ is a smallest cycle, then $\Delta \geq 0$.

An alternative is to enlarge $C_s$ at the expense of $T_s$,
by moving one node in $T_s$, at distance 1 from $C_s$, into $C_s$.
Thus $c'_i = c_i$ and $t'_i = t_i$ for all $i$, except that $c'_s = c_s +1$ and $t'_s = t_s -1$, assuming $t_s > c_s$.

We take a new function $f'$ on $X$ whose graph has these parameters.
Then
\begin{align*}
\Delta & = \entinf - H_{f',\infty} = 
 \frac 1 n ( {t_s \log_2 c_s}  -  {(t_s -1) \log_2(c_s+1)} ) \\
& = \frac 1 {n } (t_s \log_2 \frac {c_s}  {c_s+1} +  \log_2 (c_s +1) )
\end{align*}
Now $\Delta$ may be positive, negative, or zero.
If we replace $\log_2 ( 1 - \frac 1  {c_s+1})$ by $-1/(c_s+1)$, then the value is positive
if and only if $t_s < (c_s+1) \log_2 (c_s+1)$.

For a more general result, we can at least compare two functions one of which is obtained from
the other one by amalgamating components and cycles.
We take four sequences of positive integers representing component and cycle sizes:
\begin{equation*}
\label{amalgamate}
\begin{aligned}
t & = (t_1, \ldots, t_s), \\
c & = (c_1, \ldots, c_s), \\
t ' &= (t'_1, \ldots, t'_r), \\
c'  & = (c'_1, \ldots, c'_r),
\end{aligned}
\end{equation*}
with $r < s$, $n = \sum_{1 \leq i \leq s} t_i =  \sum_{1 \leq j \leq r} t'_j$,
and $c_i \leq t_i$ and  $c'_i \leq t'_i$ for all $i$.
We say that $(t,c) \prec (t',c')$ if there exist pairwise disjoint sets $S_1, \ldots, S_r \subseteq \{1,\ldots,s\}$
such that $t'_j = \sum_{i\in S_j} t_i$ 
and $c'_j = \sum_{i\in S_j} c_i$ for $1 \leq j \leq r$.
For example, if $r=s-1$, $S_j = \{j\}$ for $j<r$, and $S_r = \{s-1,s\}$, then
we may imagine the corresponding cycles $C_{s-1}$ and $C_s$ cut open at one point and then joined to form one cycle,
with all preperiod trees remaining attached.

\begin{thm}
\label{comparison}
Let $f$ and $f'$ be functions on a set of $n$ elements whose functional graphs have component and cycle sizes $t$ and $c$
and $t'$ and $c'$, respectively.
If $(t,c) \prec (t',c')$, then $\entinf < H_{f',\infty}$.
\end{thm}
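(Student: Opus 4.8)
The plan is to expand both asymptotic iteration entropies along the blocks $S_1,\ldots,S_r$ witnessing $(t,c)\prec(t',c')$ and reduce the claim to a termwise superadditivity inequality for $x\mapsto x\log_2 x$. First I would record that $S_1,\ldots,S_r$ actually partition $\{1,\ldots,s\}$: they are pairwise disjoint by definition, none of them is empty (an empty $S_j$ would force $t'_j=0$, contradicting $t'_j\geq 1$), and
$$\sum_{1\le j\le r}\sum_{i\in S_j} t_i=\sum_{1\le j\le r} t'_j=n=\sum_{1\le i\le s} t_i$$
together with $t_i>0$ for all $i$ forces $\bigcup_{j} S_j=\{1,\ldots,s\}$.

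Next, using Definition \ref{asymEnt} (or, equivalently, iterating (\ref{entinfcombin})) and $t'_j=\sum_{i\in S_j}t_i$, $c'_j=\sum_{i\in S_j}c_i$, I would write
$$n\,H_{f',\infty}-n\,\entinf=\sum_{1\le j\le r}\Bigl(t'_j\log_2 c'_j-\sum_{i\in S_j} t_i\log_2 c_i\Bigr)=\sum_{1\le j\le r}\;\sum_{i\in S_j} t_i\log_2\frac{c'_j}{c_i}.$$
For each $j$ and each $i\in S_j$ we have $c'_j=\sum_{i'\in S_j} c_{i'}\ge c_i$ because all $c_{i'}$ are positive integers, hence $\log_2(c'_j/c_i)\ge 0$; since $t_i>0$, every summand is nonnegative, giving $\entinf\le H_{f',\infty}$.

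For the strict inequality I would invoke the pigeonhole principle: since $r<s$ and the $S_j$ partition $\{1,\ldots,s\}$ into $r$ nonempty blocks, some block $S_{j_0}$ has $|S_{j_0}|\ge 2$. Choosing distinct $i_0,i_1\in S_{j_0}$ gives $c'_{j_0}=\sum_{i'\in S_{j_0}} c_{i'}\ge c_{i_0}+c_{i_1}>c_{i_0}$, so $\log_2(c'_{j_0}/c_{i_0})>0$ and the summand indexed by $(j_0,i_0)$ is strictly positive while all remaining summands are $\ge 0$; therefore $\entinf< H_{f',\infty}$. I do not expect a genuine obstacle here: the only points needing care are checking that the $S_j$ genuinely partition $\{1,\ldots,s\}$ (so that the displayed expansion is exact) and locating a block of size at least two to force strictness, and both follow immediately from $r<s$ and the positivity of the $t_i$.
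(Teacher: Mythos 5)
Your proof is correct. It rests on the same elementary fact as the paper's argument, namely that $\log_2 c'_j > \log_2 c_i$ whenever $c'_j$ is a sum containing $c_i$ together with at least one other positive term, but you organize the argument differently: the paper reduces inductively to the single amalgamation $r=s-1$, $S_r=\{s-1,s\}$ and computes $n(\entinf - H_{f',\infty})$ explicitly for that two-block merge, whereas you expand both entropies along the general partition $S_1,\ldots,S_r$ at once and obtain
$$n\,H_{f',\infty}-n\,\entinf=\sum_{1\le j\le r}\,\sum_{i\in S_j} t_i\log_2\frac{c'_j}{c_i}$$
with every term nonnegative. Your route has the advantage of making explicit two points the paper leaves implicit: that the $S_j$ really partition $\{1,\ldots,s\}$ (needed for the expansion to be exact, and following from $t'_j\ge 1$ and $\sum t'_j=n=\sum t_i$ with $t_i>0$), and that strictness comes from a block of size at least two, which exists by pigeonhole since $r<s$. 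It also sidesteps the small gap in the paper's ``inductively, it is sufficient'' step, which tacitly requires intermediate functions realizing each partial amalgamation to exist. The paper's two-block computation is shorter to write down but proves less per step.
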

\begin{proof}
Inductively, it is sufficient to consider the example above with
 $r=s-1$, $S_j = \{j\}$ for $j \leq s-2$ and $S_{r} = \{s-1,s\}$.
Thus $c'_j = c_j$ and $t'_j = t_j$ for $j < r$, and
$t'_{r} = t_{s-1} + t_s$ and $c'_{r} = c_{s-1} + c_s$.
Then
\begin{align*}
n ( \entinf- H_{f',\infty} ) & =\sum_{1\leq i \leq s} {t_i}  \log_2  {c_i } \\
& \quad - \bigl(  \sum_{1\leq i \leq s-2} {t_i}  \log_2  {c_i} + ({t_{s-1} + t_s} ) \log_2   {(c_{s-1} + c_s)} \\
 & = {t_{s-1}} (\log_2 c_{s-1} - \log_2   {(c_{s-1} + c_s)}) \\
& \quad + {t_{s}} (\log_2 c_{s} - \log_2   {(c_{s-1} + c_s)}) \bigr) < 0.
\end{align*}
\end{proof}
In other words, amalgamating components as above
increases the asymptotic iteration entropy.

\section{Examples}
We present some examples.
\begin{example}
\label{powermap}
The \emph{power map} $x \mapsto x^e$ in a finite field or a ring $\Z/N\Z$, for fixed $e, N \geq 2$,
is of cryptographic interest. Its iterations include the \emph{power generator} for pseudorandom sequences
and, with $e=2$, the Blum-Blum-Shub and Hofheinz-Kiltz-Shoup cryptosystems
(\cite{blublu86,hofkil13}).
\cite{fripom01} exhibit lower bounds on the order (or period) of this function, that is,
the lcm of all cycle lengths.
\cite{kurpom05} show that the maximal value (over all initial points) equals the order of $e$ modulo $M$,
where $M$ is the largest divisor of the Carmichael value $\lambda(N)$ that is coprime to $e$.
They prove a lower bound of about $N^{1/2}$ for a ``Blum integer'',
which is the product of two primes $p$ and $q$ for which $p-1$ and $q-1$
have a large prime divisor.
\cite{shahu11} show a similar result in finite fields, and \cite{sha11} for the case where $N$ is a prime power.
\cite{pomshp17} prove several results about the number of cycles in the functional graph,
among them a lower bound of $p^{5/12 + o(1)}$ for infinitely many primes $p$.
\cite{choshp17} compute the number of cyclic points, the average cycle length, and other quantities
for such maps, extending the work of \cite{vassha04} on $e=2$ (who use the Extended Riemann Hypothesis).
Corollary \ref{longCycle} gives a lower bound on the
 asymptotic iteration entropy of a permutation whose graph has a large cycle.

\end{example}
\begin{example}
Let $n$ be a power of 2 and suppose that the functional graph of $f$ contains a complete binary tree
whose root is mapped under $f$ to the only cycle in the graph, consisting of one point.
Figure \ref{squaringmap} illustrates this with the squaring function $x \mapsto x^2$ 
 for the Fermat prime $p=17$ on the unit group $X = \F_{17}^{\times}$ with $n=16$ elements.
Thus $s=1$, $T_1 = X$,
$t_1 = n$, $c_1 = 1$, and $\entinf = 0$. Theorem \ref{generalBound} says that
$\entfk \leq (4n  \log_2  k) / k$; the latter value tends to zero with growing $k$.
Under our measure, this function exhibits ``small'' iteration entropy.
\end{example}

\begin{example}
Suppose that $n$ is even and $f$ has 
one cycle $C_1$ of
size $n/2$, with a one-node tree attached to each point on the cycle.
Thus $s=1$, $c_1 =n/2$, $t_1 = n$.
The \emph{benzene ring} on $\F_{19}^\times$ at lower left in Figure \ref{squaringmap} is an example
on $n=12$ points.
Then
$$
\entinf = \frac {t_1} n \log_2 \frac n 2  = ( \log_2 n ) -1.
$$
If we combine it with a single fixed point $C_2 = \{ x_0 \}$, then the functional graph of
some quadratic function $f$ on a field with $q = n+1$ elements might look like this.
Then 
\begin{equation*}
 \entinf = \frac {q-1} q \log_2 ((q-1) /2) + \frac 1 q \log_2 1
 \approx (1- \frac 1 q) \cdot (\log_2 q -1).
\end{equation*}
Under our measure, both are ``large'' asymptotic iteration entropies.
\end{example}

\begin{example}
In \cite{bopgat17}, the \emph{ElGamal function} $f\colon x \mapsto g^x$ on $\F_p$ is studied,
where $p$ is a prime number and $g$ is a generator of the multiplicative group of $\F_p$.
This function occurs in some cryptographic protocols.
For the two primes $1009$ and $10\, 009$, there are $288$ and $3312$ generators,
respectively. Figures \ref{1009ent} and \ref{10009ent} show the asymptotic iteration entropies
of all these functions, normalized as $H_{f,\infty} / \log_2 p$, so that all values lie between 0 and 1.
\begin{figure}
\begin{center}
 \includegraphics[width=1.1\linewidth]{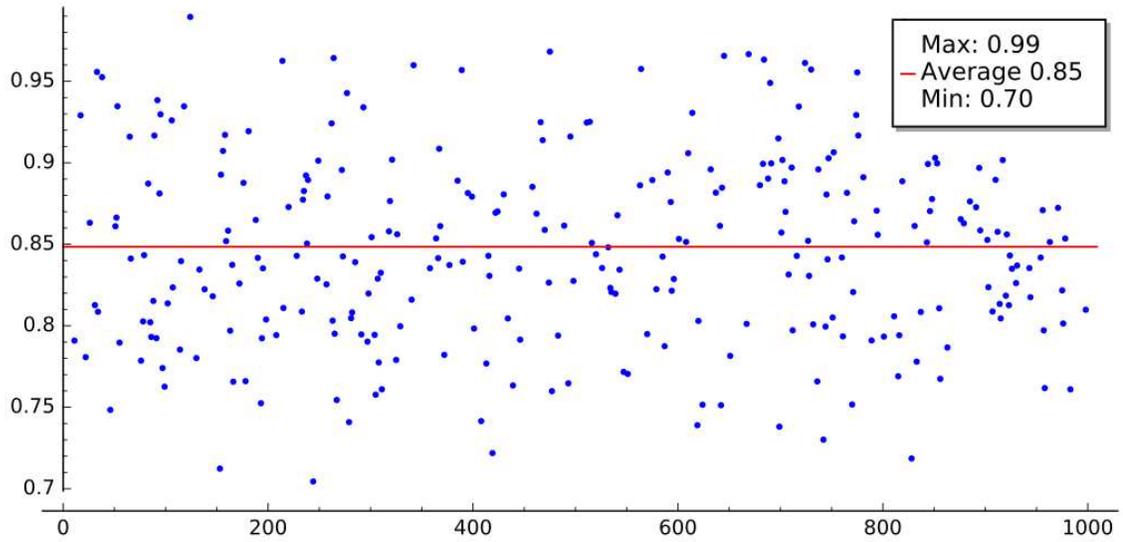}
 \end{center}
\caption{The asymptotic iteration entropies for the 288 ElGamal functions on $\F_{1009}$.
The red line at 0.85 indicates the average.}
\label{1009ent}
\end{figure}
\begin{figure}
\begin{center}
 \includegraphics[width=1.1\linewidth]{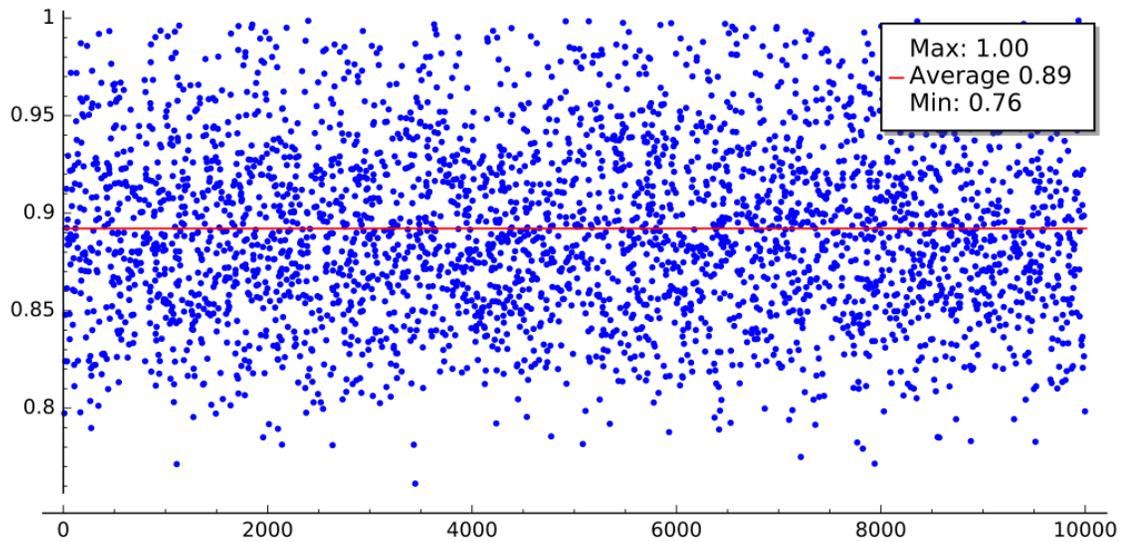}
 \end{center}
\caption{The asymptotic iteration entropies for the 3312 ElGamal functions on $\F_{10009}$.
The red line near 0.89 indicates the average.}
\label{10009ent}
\end{figure}
The values lie, on average, about 10 to 15\%\ below the maximal value of 1.
The variances were also computed but are too small to be shown.
\end{example}

\section{Open questions}
\begin{itemize}
\item
What is the average asymptotic iteration entropy of a random permutation? Or a random function?
(\ref{longCyclePerm}) provides a lower bound for individual permutations.
Is this, with the proper value of $\tau \approx 0.62433$, also a (lower or upper) bound on the average?
What is the average value of $\max\{ t \log_2 c\}$ for random functions, where $t$ runs through the component sizes
and $c$ is the size of the component's cycle?
The joint distribution of $(t,c)$ does not seem to have been studied.
The average size $\mu n$
with $\mu \approx 0.75788$ (\cite{flaodl90}) of the giant component might be a lower bound,
except that components with a fixed point ($c=1$) would have to be ruled out.
\item
The linear congruential generator of Example \ref{linCongGen} is well-known to be insecure
(see \cite{boy89}) and hence does not provide (pseudo)random values by iteration.
For large $\ell$, say $\ell = n-1$, its asymptotic iteration entropy is close to the maximal value
of $ \log_2 n$.
Thus large iteration entropy does not imply (pseudo)randomness.
Is the converse true in some sense?
\item
What is the relation of the (asymptotic) iteration entropy to usual notions of random generation?
A function on a finite set contains only a finite amount of information (or Shannon entropy)
and its iterates, from a uniformly random starting value,
 do not generate a statistically random sequence of elements.
But one may ask for a modest amount of equidistribution
(see  \cite{bopgat17} for the ElGamal function)
or whether some form of pseudorandomness is obtainable.
Conversely, does pseudorandomness imply that the function
is a permutation?
For example, the squaring function on the set of quadratic residues with Jacobi symbol 1
modulo a special type of RSA modulus is used in \cite{hofkil13};
it is pseudorandom under the assumption that such moduli are hard to factor,
it is a permutation, and in general not cyclic.
\end{itemize}

\section{Acknowledgements}
The images for Figures \ref{1009ent} and  \ref{10009ent} were produced by Lucas Perin,
whose help is much appreciated.
Many thanks go to Alina Ostafe and to Igor Shparlinski for valuable suggestions, corrections,
and pointers to the literature.

\bibliographystyle{plainnat}
\bibliography{IterationEntropy}

\end{document}